\documentclass[a4paper,12pt]{amsart}
\usepackage{amssymb}
\usepackage{bbm}
\usepackage[dvips]{graphicx}
\usepackage{pdfsync}
\usepackage{color,xcolor}
\usepackage{verbatim}
\usepackage{enumerate}

\newtheorem{thm}{Theorem}[section]
\newtheorem*{thm*}{Theorem}

\newtheorem*{lem*}{Lemma}

\newtheorem*{cor*}{Corollary}
\newtheorem{prop}[thm]{Proposition}
\newtheorem*{prop*}{Proposition}
\theoremstyle{definition}
\newtheorem{defn}[thm]{Definition}
\newtheorem*{defn*}{Definition}
\theoremstyle{remark}

\newtheorem*{rem*}{Remark}

\newtheorem*{example*}{Example}

\newtheorem*{que*}{Question}


\newcommand{\abs}[1]{\left\vert#1\right\vert}
\newcommand{\set}[1]{\left\{#1\right\}}
\newcommand{\eps}{\varepsilon}

\newcommand{\CN}{\mathcal{N}}
\newcommand{\CT}{\mathcal{T}}

\newcommand{\CS}{\mathcal{S}}

\renewcommand{\emptyset}{\varnothing}
\renewcommand{\tilde}{\widetilde}

\newcommand{\sd}{\bigtriangleup}

\usepackage{mathrsfs}

\renewcommand{\epsilon}{\varepsilon}

\renewcommand{\leq}{\leqslant}
\renewcommand{\geq}{\geqslant}

\title{Extensions of full shifts with group actions}
\author{Bartosz Frej, Dawid Huczek}

\address{\hskip- \parindent
Bartosz Frej, Faculty of Pure and Applied Mathematics, Wroclaw University of Technology, Wybrze\.ze Wyspia\'nskiego 27, 50-370 Wroc\l aw, Poland}
\email{bartosz.frej@pwr.edu.pl}

\address{\hskip- \parindent
Dawid Huczek, Faculty of Pure and Applied Mathematics, Wroclaw University of Technology, Wybrze\.ze Wyspia\'nskiego 27, 50-370 Wroc\l aw, Poland}
\email{dawid.huczek@pwr.edu.pl}

\subjclass[2010]{Primary 37B10; Secondary 37B40}    

\keywords{countable amenable group, (dynamical) tiling, free action, topological entropy}

\date{\today}


\begin{document}

\begin{abstract}
We give a sufficient condition for a symbolic topological dynamical system with action of a countable amenable group to be an extension of the full shift, a problem analogous to those studied by Ashley, Marcus, Johnson and others for actions of $\mathbb{Z}$ and $\mathbb{Z}^d$.
\end{abstract}
\maketitle

\section{Introduction}
A well-known result in the study of symbolic dynamical systems states that any 
subshift of finite type (SFT) with the action of $\mathbb{Z}$ and entropy 
greater or equal than $\log n$ factors onto the full shift over $n$ symbols -- 
this was proven in \cite{M} and \cite{B} for the cases of equal and unequal 
entropy respectively. Extending these results for actions of other groups has 
been difficult, and it is known that a factor map onto a full shift of equal 
entropy may not exist in this case (see \cite{BS}). Johnson and Madden showed 
in \cite{JM} that any SFT with the action of $\mathbb{Z}^d$, which has entropy greater 
than $\log n$ and satisfies an additional mixing condition (known as corner gluing), 
has an extension which is finite-to-one (hence of equal entropy) 
and maps onto the full shift over $n$ symbols. This result was later improved 
by Desai in \cite{D} to show that such a system factors directly onto the full 
shift, without the intermediate extension.

In this paper we use similar methods to show that in the case of amenable group 
actions, any symbolic dynamical system with entropy greater than $\log n$ which 
satisfies a mixing condition (the \emph{gluing property}, see definition \ref{def:gluing}), has an 
equal-entropy symbolic extension which factors onto the full shift over $n$ 
symbols.

\section{Amenable groups and invariance}
\begin{defn}
	A countable group $G$ is called amenable if there exists a sequence $(F_n)$ 
	of finite subsets of $G$ (known as F\o lner sets) such that for every $g\in 
	G$ we have 
	\[ \lim\limits_{n\to\infty}\frac{\abs{gF_n\sd F_n}}{\abs{F_n}}=0, \]
	where $\abs{\cdot}$ denotes the cardinality of a set, and $\sd$ denotes the 
	symmetric difference. 
\end{defn}
\begin{defn}
	For a pair of finite sets $T,D\subset G$ and $\delta>0$, we say that $T$ is 
	$(D,\delta)$-invariant, if $\frac{\abs{DT\sd T}}{\abs{T}}<\delta$.
\end{defn}
Note that if $D$ contains the neutral element of $G$, then the above condition 
simplifies to $\frac{\abs{DT\setminus T}}{\abs{T}}<\delta$.
\begin{defn}
	If $T$ and $D$ are two finite subsets of $G$, the \emph{$D$-core of 
	$T$} is the set
	\[T_D=\set{t\in T:Dt\subset T}.\] 
\end{defn}
It is easy to check that for every $\eps>0$ there exists a $\delta>0$ such that 
if $T$ is $(D,\delta)$ invariant, then $\abs{T\setminus T_D}<\eps\abs{T}$, i.e., 
$T_D$ is a relatively large subset of $T$.
%
\section{Symbolic dynamical systems and entropy}
Let $\Lambda$ be any finite set. The \emph{full $G$-shift over $\Lambda$} 
(often referred to as just the full shift over $\Lambda$) is the dynamical 
system 
with the space $\Lambda^G$ endowed with the product topology, and the action of 
$G$ defined as $(gx)(h)=x(hg)$. A \emph{symbolic dynamical system over 
$\Lambda$} is any 
closed subset of $\Lambda^G$ invariant under the action of $G$. If $T\subset G$ 
is a finite set, then a \emph{block with domain $T$} is a mapping 
$B:T\to\Lambda$. If $x\in \Lambda^G$ and $T\subset G$ is a finite set, then by 
$x(T)$ we understand a block $B$ with domain $T$ such that for every $t\in T$, 
$B(t)=x(t)$. In a slight abuse of notation, we will not distinguish between two 
blocks if their domains differ only by translation, i.e. for any $g$ we treat 
$x(T)$ and $x(Tg)$ as the same block.

If $X$ is a symbolic dynamical system over $\Lambda$, then we say that a block 
$B$ with domain $T$ \emph{occurs in $X$}, if $x(T)=B$ for some $x\in X$. 
Finally, if we denote by $\CN_T(X)$ the number of blocks with domain $T$ which 
occur in $X$, we can calculate the \emph{topological entropy of $X$} as the 
limit
\[h(X)=\lim\limits_{n\to\infty}\frac{1}{\abs{F_n}}\log\abs{\CN_{F_n}(X)}\]
(where $\log$ means logarithm with base $2$).
It is known that this limit always exists and does not depend on the choice of 
the F\o lner sequence (see Theorem 6.1 in \cite{LW}). In fact, we will make use of the following consequence of the cited theorem:
\begin{prop}	\label{prop:NT_estimate}
	For any $\eps$ there exists an $N$ and $\delta$ such that if $T$ is an 
	$(F_n,\delta)$-invariant set for some $n>N$, then 
	$\CN_T(X)>2^{(h(X)-\eps)\abs{T}}$.
	\end{prop}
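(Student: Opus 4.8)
The plan is to recognize $\log\abs{\CN_T(X)}$ as a function on the finite subsets of $G$ to which the Ornstein--Weiss--type lemma underlying Theorem 6.1 of \cite{LW} applies, and then to unwind its quantitative content. Concretely, I would set $f(T)=\log\abs{\CN_T(X)}$ and verify the three structural properties such a lemma requires. \emph{Monotonicity}: if $T\subseteq T'$, then restricting any block occurring on $T'$ to $T$ produces a block occurring on $T$, and every block occurring on $T$ arises this way (extend $x(T)=B$ to $x(T')$), so $\abs{\CN_T(X)}\leq\abs{\CN_{T'}(X)}$. \emph{Subadditivity}: a block on $T\cup T'$ is determined by its restrictions to $T$ and to $T'$, whence $\abs{\CN_{T\cup T'}(X)}\leq\abs{\CN_T(X)}\cdot\abs{\CN_{T'}(X)}$ and $f(T\cup T')\leq f(T)+f(T')$. \emph{Invariance}: the $G$-invariance of $X$ together with our convention identifying blocks that differ by a translation gives $\abs{\CN_{Tg}(X)}=\abs{\CN_T(X)}$ for every $g\in G$. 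Thus $f$ is a nonnegative, monotone, subadditive, translation-invariant function, exactly the setting of the cited result.

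Next I would invoke the quantitative conclusion. The content of the Ornstein--Weiss lemma is not merely that $\tfrac{f(F_n)}{\abs{F_n}}\to h(X)$, but that $h(X)$ is the common value to which $\tfrac{f(T)}{\abs{T}}$ converges along the net of increasingly invariant sets, and that this convergence is uniform: for the given $\eps>0$ there exist a finite set $D\subseteq G$, which we may take to contain the neutral element, and a $\delta_0>0$ such that every $(D,\delta_0)$-invariant set $T$ satisfies $\bigl|\tfrac{f(T)}{\abs{T}}-h(X)\bigr|<\eps$. In particular $f(T)>(h(X)-\eps)\abs{T}$, that is, $\abs{\CN_T(X)}>2^{(h(X)-\eps)\abs{T}}$. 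This is the desired estimate, but phrased for $(D,\delta_0)$-invariance rather than for invariance relative to a F\o lner set.

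It then remains to convert the notion of invariance. Since $(F_n)$ is a F\o lner sequence we may assume it exhausts $G$, so each fixed finite $D$ is contained in $F_n$ for all large $n$; choose $N$ with $D\subseteq F_n$ whenever $n>N$, and set $\delta=\delta_0$. If $T$ is $(F_n,\delta)$-invariant for such an $n$, then $D\subseteq F_n$ forces $DT\setminus T\subseteq F_nT\setminus T$, so $\abs{DT\setminus T}\leq\abs{F_nT\setminus T}<\delta\abs{T}$; as $D$ contains the neutral element this is precisely $(D,\delta)$-invariance, and the previous paragraph gives $\abs{\CN_T(X)}>2^{(h(X)-\eps)\abs{T}}$. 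The one genuinely delicate point is extracting the uniform-over-invariant-sets formulation from \cite{LW}: the theorem there is stated as convergence along a fixed F\o lner sequence, and one must confirm that its proof actually delivers the two-sided estimate for \emph{all} sufficiently invariant sets. This is the case, because $h(X)$ is the infimum of the means $\tfrac{f(T)}{\abs{T}}$ and the Ornstein--Weiss covering argument is uniform in the degree of invariance; the passage between $(F_n,\delta)$- and $(D,\delta)$-invariance above is then routine.
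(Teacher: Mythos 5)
Your overall route is the same as the paper's: the paper offers no independent proof of Proposition \ref{prop:NT_estimate} and simply presents it as a consequence of Theorem 6.1 of \cite{LW}, and your verification that $f(T)=\log\CN_T(X)$ is nonnegative, monotone, subadditive and translation-invariant, together with the conversion between $(D,\delta)$- and $(F_n,\delta)$-invariance at the end, is correct and fills in exactly the routine part of that reduction. The one step that deserves more care is how you extract the uniform estimate over all sufficiently invariant sets from a theorem stated as convergence along a fixed F\o lner sequence. Your stated justification --- that $h(X)$ is the infimum of the means $f(T)/\abs{T}$ over finite sets and that the covering argument is ``uniform in the degree of invariance'' --- is shakier than it needs to be: the infimum rule over \emph{all} finite sets is a genuinely stronger statement (it is the subject of separate work on Shearer-type inequalities) and is not what \cite{LW} asserts, and a maximal packing of an invariant set by translates of an arbitrary finite $T$ need only have density $\abs{T}/\abs{T^{-1}T}$, so the naive covering argument does not deliver it. Fortunately you do not need it: the uniform one-sided bound follows from the F\o lner-sequence statement itself by contradiction. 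If for some $\eps>0$ no pair $(N,\delta)$ worked, you could pick, for each $k$, a set $T_k$ that is $(F_{n_k},1/k)$-invariant with $n_k>k$ but with $f(T_k)\leq (h(X)-\eps)\abs{T_k}$; since $\abs{gT_k}=\abs{T_k}$ gives $\abs{gT_k\sd T_k}=2\abs{gT_k\setminus T_k}\leq 2\abs{F_{n_k}T_k\setminus T_k}$ once $g\in F_{n_k}$, the sequence $(T_k)$ is itself a F\o lner sequence, and then $\lim_k f(T_k)/\abs{T_k}=h(X)$ by the independence of the limit from the choice of F\o lner sequence --- a contradiction. With that substitution in place of the ``infimum of the means'' claim, your argument is complete and matches the paper's intent.
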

We will also assume, that for every finite set $D$ we have $D\subset F_n$ for sufficiently large $n$.
	
\section{Tilings}

We briefly recall the notions and most important results concerning tilings of 
amenable groups; for details we refer the reader to \cite{DHZ}. 
\begin{defn}
A \emph{tiling} of an amenable group $G$ is a collection $\CT$ 
of finite subsets of $G$, such that:
\begin{itemize}
	\item $T_1\cap T_2=\emptyset$ whenever $T_1,T_2$ are two different elements 
	of $\CT$.
	\item $\bigcup_{T\in \CT}T=G$
	\item There exists a finite family $\CS=(S_1,\ldots,S_k)$ of finite subsets 
	of $G$, such that every $T$ can be uniquely represented in the form $S_jg$ 
	for some $j\in\set{1,2,\ldots,k}$ and $g\in G$.
\end{itemize}
The elements of $\CT$ are referred to as \emph{tiles}, and the elements of 
$\CS$ are referred to as \emph{shapes}. Also, we can define a mapping $\sigma:\CT\to 
\CS$ such that $S_j=\sigma(T)$ if and only if $T=S_jg$ for some $g\in G$.
\end{defn}
Every tiling $\CT$ with shapes $S_1,\ldots,S_k$ induces an element $x_{\CT}$ of 
$\set{0,1,\ldots,k}^G$, defined by requesting that if $S_jg\in\CT$ for some 
$j$, then $x_{\CT}(g)=j$ (the properties constituting the definition of a 
tiling ensure that such a $j$ is unique), and if no such $j$ exists, then 
$x_{\CT}(g)=0$. This in turn allows us to associate with $\CT$ a symbolic 
dynamical system $X_{\CT}$ defined as the orbit closure of $x_{\CT}$ under the 
shift action. Note that by reversing the procedure which defines $x_{\CT}$, we 
can obtain from every element of $X_{\CT}$ another tiling of $G$ which uses the 
same collection of shapes as $\CT$.

In view of this discussion, it is natural to define the action of $G$ directly on tilings of $G$ by putting:
\[
g\CT=\set{Tg^{-1}:T\in \CT}
\]
Clearly, $g(x_\CT)(h)=j$ if and only 
if $x_\CT(hg)=j$, so $S_jh$ is a tile of $g\CT$ if and only if $S_jhg$ is a tile 
of $\CT$ and this definition is consistent with the shift action of $G$ on $X_\CT$.
 
We will need the following result which is an immediate consequence of theorem 
5.2 of \cite{DHZ}:
\begin{thm}\label{thm:tiling}
	If $G$ is a countable amenable group and $K\subset G$ is a finite set, then 
	for 
	every $\eps>0$ there exists a tiling $\CT$ of $G$ such that the shapes of 
	$\CT$ are $(K,\eps)$-invariant sets, and the system $X_{\CT}$ has entropy 
	zero.
\end{thm}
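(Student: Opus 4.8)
The plan is to obtain the statement essentially by quoting Theorem~5.2 of \cite{DHZ}; the only work lies in matching its formulation to ours. That theorem produces, for a prescribed degree of invariance, a tiling of $G$ whose shapes are as invariant as desired and whose associated tiling system is deterministic, i.e.\ has zero topological entropy. What must be checked is, first, that invariance with respect to the arbitrary finite set $K$ can be extracted from the invariance supplied by \cite{DHZ} (which is naturally phrased in terms of the F\o lner sequence $(F_n)$), and second, that the zero-entropy system there is the system $X_{\CT}$ as defined in this section.

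First I would reduce invariance with respect to $K$ to invariance with respect to a single F\o lner set. Let $e$ denote the neutral element of $G$. Enlarging $K$ to $K\cup\set{e}$ only strengthens the required invariance, so we may assume $e\in K$; then $(D,\delta)$-invariance of a finite set $T$ reads $\abs{DT\setminus T}<\delta\abs{T}$ whenever $e\in D$. By the standing assumption at the end of Section~3 (applied to $D=K\cup\set{e}$), fix $n$ so large that $K\subset F_n$ and $e\in F_n$. For any finite $T$ the inclusion $K\subset F_n$ gives $KT\subset F_nT$, hence $KT\setminus T\subset F_nT\setminus T$ and therefore $\abs{KT\setminus T}\leq\abs{F_nT\setminus T}$. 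Consequently every $(F_n,\eps)$-invariant set is automatically $(K,\eps)$-invariant, and it suffices to produce a tiling with $(F_n,\eps)$-invariant shapes whose system has entropy zero.

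Next I would apply Theorem~5.2 of \cite{DHZ} to the F\o lner set $F_n$ and tolerance $\eps$, obtaining a tiling $\CT$ of $G$ whose shapes are $(F_n,\eps)$-invariant and whose tiling system has zero topological entropy. By the previous paragraph the shapes of $\CT$ are then $(K,\eps)$-invariant, as desired. To conclude, one identifies the system furnished by \cite{DHZ} with our $X_{\CT}$, the orbit closure of $x_{\CT}$ in $\set{0,1,\dots,k}^G$: as noted above every element of $X_{\CT}$ is itself a tiling over the same finite shape collection $\CS$, so the two descriptions coincide and the vanishing of entropy transfers to $h(X_{\CT})=0$.

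The only genuine obstacle is this translation between frameworks, together with minor selection issues: if \cite{DHZ} delivers a whole sequence of increasingly invariant tilings rather than a single one, one picks the member whose shapes already meet the $(F_n,\eps)$-invariance threshold. Beyond such bookkeeping, the substantive content --- the simultaneous achievement of arbitrarily invariant shapes and zero entropy --- is imported directly from \cite{DHZ} and needs no further estimate.
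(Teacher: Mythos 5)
Your proposal matches the paper exactly: the paper offers no proof beyond declaring the theorem an immediate consequence of Theorem 5.2 of \cite{DHZ}, and your reduction of $(K,\eps)$-invariance to $(F_n,\eps)$-invariance for a F\o lner set containing $K$, together with the identification of the tiling system with $X_{\CT}$, is precisely the intended bookkeeping. One small caveat: the claim that enlarging $K$ to $K\cup\set{e}$ ``only strengthens the required invariance'' is backwards as stated, since $\abs{KT\sd T}$ contains the extra term $\abs{T\setminus KT}$ absent from $\abs{(K\cup\set{e})T\sd T}$; to treat a $K$ not containing $e$ one should enlarge to $K\cup K^{-1}\cup\set{e}$ and shrink $\eps$ accordingly, though in this paper the theorem is only ever invoked for sets that already contain the neutral element, so nothing is lost.
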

\section{The main result}
We begin by introducing a property analogous to the corner/centre gluing conditions used in the $\mathbb{Z}^d$ context.
\begin{defn}	\label{def:gluing}
We say that a symbolic dynamical system $(X,G)$ has the \emph{gluing 
property} if there exists a finite set $D$ (containing the neutral element) 
such that for any finite subsets $T_1$ and $T_2$ of $G$, such that $T_2\cap 
DT_1=\emptyset$, and any two blocks $A$ 
and $B$, with domains respectively $T_1$ and $T_2$, there exists an 
$x\in X$ such that $x(T_1)=A$ and $x(T_2)=B$. The set $D$ will be 
referred to as the \emph{gluing distance}.
\end{defn}

\begin{thm}\label{thm:main}
If the symbolic dynamical system $(X,G)$ with topological entropy greater than 
$\log k$ has the gluing property, then there exists a symbolic 
extension $(\tilde{X},G)$ of $X$, having the same topological entropy as $X$, 
and such that 
$(\tilde{X},G)$ factors onto the 
full shift over $k$ symbols.
\end{thm}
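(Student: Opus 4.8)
The plan is to build the extension $\tilde X$ by coupling each point of $X$ with a "tiling layer" coming from Theorem~\ref{thm:tiling}, and to use the tiles as the units on which we encode the symbols of the full $k$-shift. Fix $\eps>0$ small compared to $h(X)-\log k$. By Theorem~\ref{thm:tiling} choose a zero-entropy tiling system $X_\CT$ whose shapes $S_1,\dots,S_m$ are $(K,\eps')$-invariant for a set $K$ and constant $\eps'$ to be specified (in particular $K$ should contain the gluing distance $D$ and a large \Folner{} set $F_n$ so that Proposition~\ref{prop:NT_estimate} applies to each shape). The extension will live in a product alphabet $\Lambda\times\set{0,1,\dots,m}$: the first coordinate carries an $X$-point, the second carries the tiling. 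Consider the subsystem $Y\subset X\times X_\CT$ of all points whose $X_\CT$-coordinate is a genuine tiling; since $h(X_\CT)=0$, the join $Y$ has entropy equal to $h(X)$, and $Y$ already factors onto $X$ by projection. The system $\tilde X$ will be a suitable subsystem of $Y$.

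The combinatorial heart is a local coding rule that turns the $X$-content read on each tile into a symbol of $\set{1,\dots,k}$. Because each shape $S_j$ is highly invariant, Proposition~\ref{prop:NT_estimate} gives $\CN_{S_j}(X)>2^{(h(X)-\eps)\abs{S_j}}>k^{\,|S_j|}\cdot 2^{c|S_j|}$ for some $c>0$; thus on the $D$-core $(S_j)_D$ there are far more admissible blocks than the $k^{|(S_j)_D|}$ symbol-assignments we need to realise. The idea is to fix, for each shape $S_j$, a surjection $\pi_j$ from the blocks occurring on $(S_j)_D$ onto the set of all functions $(S_j)_D\to\set{1,\dots,k}$, i.e. onto all ways of writing a $k$-ary symbol at each position of the core; the entropy surplus guarantees such a surjection exists and can moreover be chosen so that each fibre is large. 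Reading a point $y\in Y$, on each tile $T=S_j g$ we apply $\pi_j$ to the block $y\restriction (S_j)_D g$ to obtain a partial labelling of $G$ by $\set{1,\dots,k}$; the $D$-cores of the tiles cover all but an $\eps$-fraction of $G$, but to get a genuine factor onto the \emph{full} $k$-shift we must cover everything.

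The main obstacle, and the reason the gluing property is needed, is that the $D$-cores do not tile $G$: there is a thin "boundary region" $G\setminus\bigcup_T (S_{\sigma(T)})_D\cdot(\text{position})$ on which no symbol has yet been assigned, and we must both fill these gaps and, more seriously, guarantee that every target configuration in $\set{1,\dots,k}^G$ is actually hit so that the factor map is onto. The plan here is to enlarge the extension: rather than only coupling $X$ with the tiling, I would let $\tilde X$ consist of triples carrying additionally a free choice of the missing boundary symbols, and then use the gluing property to check that this extension is nonempty and that its $X$-projection is onto. Concretely, given any $z\in\set{1,\dots,k}^G$, I would produce a preimage by choosing, on each tile $T=S_jg$, a block in $\pi_j^{-1}\bigl(z\restriction (S_j)_D g\bigr)$, and then invoke the gluing property tile-by-tile: because distinct tiles are separated and the cores sit $D$-deep inside each tile, the chosen core-blocks are pairwise $D$-separated, so Definition~\ref{def:gluing} (applied inductively, or in the limit over an exhaustion of $G$ together with a compactness argument in $\Lambda^G$) yields an $x\in X$ realising all of them simultaneously; pairing this $x$ with the tiling and the recorded boundary symbols gives the desired point of $\tilde X$ mapping to $z$. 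Finally I would verify that $\tilde X$ is shift-invariant and closed, that the coding map to the full $k$-shift is continuous and $G$-equivariant (it is a sliding-block code relative to the tiling structure), and that $h(\tilde X)=h(X)$, the last because the tiling layer contributes zero entropy and the boundary layer is supported on an $\eps$-fraction of $G$ whose entropy contribution $\le \eps\log\abs{\Lambda\times k}$ can be absorbed by letting the shapes become more invariant. The delicate point to get right is bookkeeping the separation constant so that gluing applies uniformly across all tiles and shapes at once.
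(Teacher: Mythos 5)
Your overall architecture matches the paper's: take the product of $X$ with a zero-entropy tiling system obtained from Theorem~\ref{thm:tiling}, encode the $k$-ary output on each tile via a surjection defined on the blocks occurring on the tile's $D$-core, and prove surjectivity of the resulting code by realizing the chosen core-blocks tile by tile with the gluing property and passing to a limit by compactness. That surjectivity argument is exactly the one in the paper and is sound.

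The genuine gap is in your treatment of the region not covered by the cores, and it breaks the equal-entropy requirement. You make each $\pi_j$ surject only onto $\set{1,\dots,k}^{(S_j)_D}$, so the output symbols on $S_j\setminus (S_j)_D$ are not produced by the code at all, and you propose to adjoin them to $\tilde{X}$ as a freely chosen third layer. But once the tiling is fixed, the uncovered region has a fixed positive density $\eps'>0$ in $G$ (for a nontrivial gluing distance $D$ the sets $S\setminus S_D$ are nonempty), and a free $\set{1,\dots,k}$-valued layer supported there contributes roughly $\eps'\log k>0$ of additional entropy, so $h(\tilde{X})>h(X)$. Your suggestion to absorb this "by letting the shapes become more invariant" does not work: $\eps'$ is a constant of the single system $\tilde{X}$ you must exhibit, not a parameter you can send to zero afterwards. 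The paper's fix is to aim the surjection higher rather than add a layer: because the shapes can be chosen so that $\abs{S\setminus S_D}<((\gamma-1)\log_l k)\abs{S}$ for some $1<\gamma<h(X)/\log k$, the entropy surplus yields $\CN_{S_D}(X)>k^{\abs{S}}$ (not merely $k^{\abs{S_D}}$), so one can take $\phi_S$ mapping the core-blocks \emph{onto} $\set{1,\dots,k}^{S}$; the code reads only the core but writes the entire tile. Since the tiles themselves partition $G$, nothing is left uncovered, no auxiliary layer is needed, and $\tilde{X}=X\times X_{\CT_0}$ has entropy exactly $h(X)$. With that single change the rest of your argument goes through.
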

\begin{proof}
Let $l$ be the number of symbols in the alphabet of $X$, let $\gamma$ be a 
number such that $1<\gamma<\frac{h(X)}{\log k}$, and let $D$ be the gluing 
distance. There exists a tiling 
$\CT_0$ 
of $G$, such that $X_{\CT_0}$ has topological entropy zero, and for every shape 
$S$ of $\CT_0$:
\begin{enumerate}
\item $\abs{S\setminus S_D}<((\gamma-1)\log_l k)\abs{S}$.
\item\label{manyblocks} The number of blocks with domain $S$ occurring in $X$ 
is greater than $k^{\gamma \abs{S}}$.
\end{enumerate}
Indeed, using proposition \ref{prop:NT_estimate} for $\eps< h(X)-\gamma\log k$ we obtain $N$ and $\delta$ such that (2) holds for blocks $S$ which are $(F_n,\delta)$-invariant, where $n\geq N$. Then, by theorem \ref{thm:tiling} we get a $(F_n,\eps)$-invariant tiling, where $n\geq N$ and $0<\eps<\delta$ are appropriately chosen to guarantee (1) (in particular, we demand that $D \subset F_n$).

Combining the properties (1) and (2), we can estimate from below the number of 
blocks 
with domain $S_D$ occurring in $X$: If we denote by $\CN_1$ the number of 
blocks with domain $S$, and by $\CN_2$ the number of blocks with domain $S_D$, 
then we have
\[k^{\gamma \abs{S}}<\CN_1<\CN_2 l^{\abs{S\setminus 
S_D}}<\CN_2l^{((\gamma-1)\log_l k)\abs{S}}=\CN_2k^{(\gamma-1)\abs{S}}, \]
hence
\[\CN_2>k^{\abs{S}}.\]
It follows that for every shape $S$ of $\CT_0$ we can construct a mapping 
$\phi_S$ from the collection of all 
blocks with domain $S_D$ occurring in $X$ onto $\{1,...,k\}^S$. 

We can now create the symbolic 
dynamical system $(\tilde{X},G)$ as the product of $(X,G)$ and $(X_{\CT_0},G)$. 
This is obviously an extension of $X$, and since the entropy of the product is 
equal to the sum of entropies of both systems, $\tilde{X}$ has entropy equal to 
$X$.
Every element $\tilde{x}$ of $\tilde{X}$ consists of a pair $(x,\CT)$ where 
$x\in X$ and $\CT$ is a tiling of $G$ using the same shapes as $\CT_0$. We can 
now define a map $\phi:\tilde{X}\to\{1,...,k\}^G$ as follows: for every 
$\tilde{x}=(x,\CT)$ let $y=\phi(\tilde{x})$ be defined by requiring that for 
every $T\in \CT$ we have 
$y(T)=\phi_{\sigma(T)}(x(T_D))$. Since $y(T)$ depends only on $x(T)$, this map is 
continuous. It is also easy to verify that it commutes with the shift: If 
$\tilde{x}=(x,\CT)$ is an element of $\tilde{X}$, then $g\tilde{x}=(gx,g\CT)$. 
For every tile $Tg^{-1}$ of 
$g\CT$ we have 
\[(\phi(g\tilde{x}))(Tg^{-1})=\phi_{\sigma(Tg^{-1})}\big(gx((Tg^{-1})_D)\big)=\phi_{\sigma(T)}(x(T_D))=
\phi(\tilde{x})(T),\]
therefore, since $T$ was arbitrary, for every $h\in G$ we have
\[(\phi(g\tilde{x}))(h)=\phi(\tilde{x})(hg).\]

It remains to verify that $\phi$ is onto. Let $y$ be any element of $\{1,...,k\}^G$ 
and let $T_1,T_2,\ldots$ be an enumeration of the tiles of $\CT_0$ and let $B_i=y(T_i)$. There 
exists some 
$x_1\in X$ such that 
$\phi_{\sigma(T_1)}(x_1( (T_1)_D))=B_1$, and thus for $\tilde{x}_1=(x_1,\CT_0)$ we have 
$\phi(\tilde{x}_1)(T_1)=B_1$. Now, suppose that for some $j$ we have already 
found an $\tilde{x}_j\in \tilde{X}$ such that for every $i\leq j$ we have 
$\phi(\tilde{x}_j)(T_i)=B_i$ (so far we know this is possible for $j=1$). There 
exists some 
$x'_{j+1}\in X$ such that 
$\phi_{\sigma(T_{j+1})}(x'_{j+1}( (T_{j+1})_D))=B_{j+1}$. Now, the sets $D(T_{j+1})_D$ 
and $T_1\cup T_2\cup\ldots\cup T_j$ are disjoint, so the gluing property means 
there exists some $x_{j+1}$ such that $x_{j+1}(T_i)=x_j(T_i)$ for 
$i=1,\ldots,j$, and $x_{j+1}((T_{j+1})_D)=x'_{j+1}((T_{j+1})_D)$. If we now set 
$\tilde{x}_{j+1}=(x_{j+1},\CT_0)$, we will have 
$\phi(\tilde{x}_{j+1})(T_i)=B_i$ for every $i\leq j+1$. By the principle of 
mathematical induction we obtain that for every $j$ there exists an 
$\tilde{x}_j\in \tilde{X}$ such that for every $i\leq j$ we have 
$\phi(\tilde{x}_j)(T_i)=B_i$. Since $\tilde{X}$ is compact, there exists a 
convergent subsequence of $(\tilde{x}_j)$ converging to some $\tilde{x}\in 
\tilde{X}$. Hence for every $i$ there exists some $j\geq i$ such that 
$\tilde{x}(T_i)=\tilde{x}_j(T_i)$, but then 
$\phi(\tilde{x})(T_i)=\phi(\tilde{x}_j)(T_i)=B_i$. Since $i$ was arbitrary, 
$\phi(\tilde{x})=y$, and thus $\phi$ is onto.
\end{proof}
\section*{Acknowledgements}
The research is funded by NCN grant 2013/08/A/ST1/00275.


\end{document}